\newcommand{\defi}[1]{{\upshape\sffamily #1}}
\newcommand{\mf}[1]{\mathfrak{#1}}
\renewcommand{\a}{\alpha}
\renewcommand{\b}{\beta}
\newcommand{\bw}{\bigwedge}
\newcommand{\gl}{\mf{gl}}
\renewcommand{\ll}{\lambda}
\newcommand{\oo}{\otimes}
\newcommand{\Ext}{\operatorname{Ext}}
\newcommand{\GL}{\operatorname{GL}}
\newcommand{\Hom}{\operatorname{Hom}}
\newcommand{\Sym}{\operatorname{Sym}}
\newcommand{\Tor}{\operatorname{Tor}}
\newcommand{\bb}[1]{\mathbb{#1}}
\renewcommand{\rm}[1]{\textrm{#1}}
\newcommand{\mc}[1]{\mathcal{#1}}
\newcommand{\ol}[1]{\overline{#1}}
\newcommand{\scpr}[2]{\left\langle #1,#2 \right\rangle}
\def\lra{\longrightarrow}
\newtheorem{theorem}{Theorem}[section]
\newtheorem*{theorem*}{Theorem}
\newtheorem{lemma}[theorem]{Lemma}
\newtheorem{proposition}[theorem]{Proposition}
\newtheorem{corollary}[theorem]{Corollary}
\newtheorem*{corollary*}{Corollary}
\newtheorem*{syzIaxb*}{Theorem on Syzygies of Rectangular Ideals}
\theoremstyle{definition}
\newtheorem*{definition*}{Definition}
\newtheorem{example}[theorem]{Example}
\newtheorem{problem}[theorem]{Problem}
\theoremstyle{remark}
\newtheorem*{remark*}{Remark}
\numberwithin{equation}{section}
\begin{document}

\title{The syzygies of some thickenings of determinantal varieties}

\author{Claudiu Raicu}
\address{Department of Mathematics, University of Notre Dame, 255 Hurley Hall, Notre Dame, IN 46556\newline
\indent Institute of Mathematics ``Simion Stoilow'' of the Romanian Academy}
\email{craicu@nd.edu}

\author{Jerzy Weyman}
\address{Department of Mathematics, University of Connecticut, Storrs, CT 06269}
\email{jerzy.weyman@uconn.edu}

\subjclass[2010]{Primary 13D02, 14M12, 17B10}

\date{\today}

\keywords{Syzygies, determinantal varieties, permanents, general linear superalgebra.}

\begin{abstract} The vector space of $m\times n$ complex matrices ($m\geq n$) admits a natural action of the group $\GL=\GL_m\times\GL_n$ via row and column operations. For positive integers $a,b$, we consider the ideal $I_{a\times b}$ defined as the smallest $\GL$--equivariant ideal containing the $b$-th powers of the $a\times a$ minors of the generic $m\times n$ matrix. We compute the syzygies of the ideals $I_{a\times b}$ for all $a,b$, together with their $\GL$--equivariant structure, generalizing earlier results of Lascoux for the ideals of minors ($b=1$), and of Akin--Buchsbaum--Weyman for the powers of the ideals of maximal minors ($a=n$). Our methods rely on a nice connection between commutative algebra and the representation theory of the superalgebra $\gl(m|n)$, as well as on our previous calculation of $\Ext$ modules done in the context of describing local cohomology with determinantal support. Our results constitute an important ingredient in the proof by Nagpal--Sam--Snowden of the first non-trivial Noetherianity results for twisted commutative algebras which are not generated in degree one.
\end{abstract}

\maketitle

\section{Introduction}\label{sec:intro}

For positive integers $m\geq n$, we consider the ring $S=\Sym(\bb{C}^m\oo\bb{C}^n)(=\bb{C}[z_{ij}])$ of polynomial functions on the vector space of $m\times n$ matrices with entries in the complex numbers. The ring $S$ admits an action of the group $\GL=\GL_m(\bb{C})\times\GL_n(\bb{C})$, and it decomposes into irreducible $\GL$--representations according to \defi{Cauchy's formula}:
\[S=\bigoplus_{\ll=(\ll_1\geq\ll_2\geq\cdots\geq\ll_n\geq 0)} S_{\ll}\bb{C}^m\oo S_{\ll}\bb{C}^n,\]
where $S_{\ll}$ denotes the \defi{Schur functor} associated to a partition $\ll$. For each $\ll$, we let $I_{\ll}$ denote the ideal in $S$ generated by the irreducible representation $S_{\ll}\bb{C}^m\oo S_{\ll}\bb{C}^n$. Every ideal $I\subset S$ which is preserved by the $\GL$-action is a sum of ideals $I_{\ll}$: such ideals $I$ have been classified and their geometry has been studied by De Concini, Eisenbud and Procesi in the 80s \cite{DCEP}. Nevertheless, their syzygies are still mysterious, and in particular the following problem remains unsolved:

\begin{problem}\label{problem:main}
 Describe the syzygies of the ideals $I_{\ll}$, together with their $\GL$-equivariant structure.
\end{problem}

We expect that a complete solution to this problem will be intimately related to the representation theory of the superalgebra $\gl(m|n)$ \cite{kac}. The connection is explained as follows: the universal enveloping algebra of $\gl(m|n)$ contains as a subalgebra the exterior algebra $\Lambda=\bw\left(\bb{C}^m\oo\bb{C}^n\right)$. Every $\gl(m|n)$ module $P$ can then be thought of as a $\Lambda$-module, which by the BGG correspondence \cite[Chapter~7]{eis-syz} gives rise to a linear complex over the polynomial ring $S$. When $P$ is a Kac module, the corresponding complex is just a Koszul complex, and it is exact. The composition factors of Kac modules however give rise to complexes which are typically no longer exact, and in many cases their homology groups are closely related to the ideals $I_{\ll}$. There is a significant literature related to the character theory of $\gl(m|n)$-modules \cites{serganova,brundan}, and in particular to Kac modules \cites{hughes-king-jeugt,su-hughes-king,su}, and it is our hope that this paper will provide sufficient motivation for a more systematic study of the corresponding complexes from a commutative algebra perspective. The connection with commutative algebra was introduced in work of Akin and Weyman \cites{akin-weyman1,akin-weyman2}. We will use their work together with our previous calculation of $\Ext$ modules \cite{raicu-weyman} in order to answer Problem~\ref{problem:main} for a large class of ideals $I_{\ll}$. This class is large enough to provide an interesting application to the study of Noetherianity properties for twisted commutative algebras \cite{nagpal-sam-snowden}.

The goal of our paper is to give a quick solution to Problem~\ref{problem:main} in the case when $\ll$ is a \defi{rectangular partition}, which means that there exist positive integers $a,b$ such that $\ll_1=\cdots=\ll_a=b$ and $\ll_i=0$ for $i>a$ (alternatively, the \defi{Young diagram} associated to $\ll$ is the $a\times b$ rectangle). In this case we write $\ll=a\times b$ and $I_{\ll}=I_{a\times b}$. One can think of $I_{a\times b}$ as the smallest $\GL$-equivariant ideal which contains the $b$-th powers of the $a\times a$ minors of the generic matrix of indeterminates $Z=(z_{ij})$. What distinguishes the ideals $I_{a\times b}$ among all the $I_{\ll}$'s is that they define a scheme without embedded components, so from a geometric point of view they form the simplest class of $\GL$-equivariant ideals after the reduced (and prime) ideals of minors. Examples of ideals $I_{a\times b}$ include:
\begin{itemize}
 \item $I_{a\times 1}=I_a$, the ideal generated by the $a\times a$ minors of $Z$.
 \item $I_{n\times b}=I_n^b$, the $b$-th power of the ideal $I_n$ of maximal minors of $Z$. 
 \item $I_{1\times b}$, the ideal of $b\times b$ permanents of $Z$: here by a $b\times b$ permanent of $Z$ we mean the permanent of a $b\times b$ matrix obtained by selecting $b$ rows and $b$ columns of $Z$, not necessarily distinct; for instance, when $m=n=2$ we have:
\begin{equation}\label{eq:I1x2}
 I_{1\times 2}=(z_{11}^2,z_{12}^2,z_{21}^2,z_{22}^2,z_{11}z_{12},z_{11}z_{21},z_{12}z_{22},z_{21}z_{22},z_{11}z_{22}+z_{12}z_{21}).
\end{equation}
\end{itemize}

To state our main result, we need to introduce some notation. We write $Rep_{\GL}$ for the representation ring of the group $\GL$, and for a given $\GL$-representation $M$, we let $[M]\in Rep_{\GL}$ denote its class in the representation ring. We let
\begin{equation}\label{eq:defsyzygies}
B_{i,j}(I_{a\times b})=\Tor_i^S(I_{a\times b},\bb{C})_j 
\end{equation}
denote the vector space of $i$-\defi{syzygies} of degree $j$ of $I_{a\times b}$. We encode the syzygies of $I_{a\times b}$ into the \defi{equivariant Betti polynomial}
\begin{equation}\label{eq:defequivBetti}
B_{a\times b}(z,w)=\sum_{i,j\in\bb{Z}} [B_{i,j}(I_{a\times b})]\cdot w^i\cdot z^j\in Rep_{\GL}[z,w],
\end{equation}
so the variable $z$ keeps track of the internal degree, while $w$ keeps track of the homological degree. 

If $r,s$ are positive integers, $\a$ is a partition with at most $r$ parts ($\a_i=0$ for $i>r$) and $\b$ is a partition with parts of size at most $s$ ($\b_1\leq s$), we construct the partition
\begin{equation}\label{eq:defllrsab}
\ll(r,s;\a,\b)=(s+\a_1,\cdots,s+\a_r,\b_1,\b_2,\cdots). 
\end{equation}
This is easiest to visualize in terms of Young diagrams: one starts with an $r\times s$ rectangle, and attach $\a$ to the right and $\b$ to the bottom of the rectangle. If $r=4$, $s=5$, $\a=(4,2,1)$, $\b=(3,2)$, then
\begin{equation}\label{eq:Yngllrsab}
\ll(r,s;\a,\b)=\ytableausetup{smalltableaux,aligntableaux=center}
\ydiagram
{5,5,5,5}
*[*(green)]{5+4,5+2,5+1}
*[*(blue)]{0,0,0,0,3,2}
\end{equation}
We write $\mu'$ for the conjugate partition to $\mu$ (obtained by transposing the Young diagram of $\mu$) and consider the polynomials $h_{r\times s}\in Rep_{\GL}[z,w]$ given by
\begin{equation}\label{eq:defhrs}
h_{r\times s}(z,w)=\sum_{\a,\b}[S_{\ll(r,s;\a,\b)}\bb{C}^m \oo S_{\ll(r,s;\b',\a')}\bb{C}^n]\cdot z^{r\cdot s + |\a|+|\b|}\cdot w^{|\a|+|\b|}, 
\end{equation}
where the sum is taken over partitions $\a,\b$ such that $\a$ is contained in the $\min(r,s)\times(n-r)$ rectangle ($\a_1\leq n-r$, $\a_1'\leq\min(r,s)$) and $\b$ is contained in the $(m-r)\times\min(r,s)$ rectangle ($\b_1\leq\min(r,s)$ and $\b_1'\leq m-r$). We also need to introduce the \defi{Gauss polynomial} ${r+s\choose r}_w\in\bb{Z}[w]$,
\begin{equation}\label{eq:defGausspoly}
{r+s\choose r}_w=\sum_{s\geq t_1\geq\cdots\geq t_r\geq 0} w^{t_1+\cdots+t_r}, 
\end{equation}
which is the generating function for partitions contained inside the $r\times s$ rectangle. Note that ${r+s\choose r}_{w^2}$ is the \defi{Poincar\'e polynomial} of the Grassmannian of $r$-dimensional subspaces of an $(r+s)$-dimensional vector space, and also that ${r+s\choose r}_1={r+s\choose r}$ is the usual binomial coefficient. Our main result is:

\begin{syzIaxb*}[Theorem~\ref{thm:main}]
 The equivariant Betti polynomial of the ideal $I_{a\times b}$ is
\[B_{a\times b}(z,w)=\sum_{q=0}^{n-a} h_{(a+q)\times(b+q)}\cdot w^{q^2+2q}\cdot{q+\min(a,b)-1\choose q}_{w^2}\]
\end{syzIaxb*}

\noindent When $b=1$, this recovers the result of Lascoux on syzygies of determinantal varieties \cite{lascoux}. When $a=n$, we obtain the syzygies of the powers of the ideals of maximal minors, as originally computed by Akin--Buchsbaum--Weyman \cite{akin-buchsbaum-weyman}.

\begin{example}\label{ex:I1x2}
 When $m=n=2$, the ideal $I_{1\times 2}$ from (\ref{eq:I1x2}) has the equivariant Betti polynomial
\[B_{1\times 2}(z,w)=h_{1\times 2}+h_{2\times 3}\cdot w^3,\]
where
\[h_{1\times 2}=[\Sym^2\bb{C}^2\oo\Sym^2\bb{C}^2]\cdot z^2+([\Sym^3\bb{C}^2\oo S_{2,1}\bb{C}^2]+[S_{2,1}\bb{C}^2\oo\Sym^3\bb{C}^2])\cdot z^3\cdot w+[S_{3,1}\bb{C}^2\oo S_{3,1}\bb{C}^2]\cdot z^4\cdot w^2,\]
and
\[h_{2\times 3}=[S_{3,3}\bb{C}^2\oo S_{3,3}\bb{C}^2]\cdot z^6.\]
The equivariant Betti table (where the $(i,j)$-entry is $[B_{i,i+j}(I_{1\times 2})]\in Rep_{\GL}$, represented pictorially in terms of Young diagrams; as in (\ref{eq:Yngllrsab}) we use empty boxes for the $r\times s$ rectangle inside $\ll(r,s;\a,\b)$ and $\ll(r,s;\b',\a')$, green boxes for the partitions $\a,\a'$ and blue boxes for the partition $\b,\b'$) then looks like
\[\ytableausetup{boxsize = 1.25em}
 \begin{array}{|c|c|c|c|}
  \hline
   & & & \\
   \ydiagram{2}\oo\ydiagram{2} & \ydiagram{2}*[*(green)]{2+1}\oo\ydiagram{2}*[*(green)]{0,1} + \ydiagram{2}*[*(blue)]{0,1}\oo\ydiagram{2}*[*(blue)]{2+1} & \ydiagram{2}*[*(green)]{2+1}*[*(blue)]{0,1}\oo\ydiagram{2}*[*(blue)]{2+1}*[*(green)]{0,1} &  - \\
   & & &\\
  \hline
   & & &\\
  - & - & - & \ydiagram{3,3}\oo\ydiagram{3,3}\\
   & & & \\
  \hline
 \end{array}
\]
Taking dimensions of representations ($\dim(\Sym^r\bb{C}^2)=r+1$, $\dim(S_{r,1}\bb{C}^2)=r$, $\dim(S_{r,r}\bb{C}^2)=1$), we get the usual Betti table, which can be verified for instance using Macaulay2 \cite{M2}:
\[
 \begin{array}{|c|c|c|c|}
  \hline
  9 & 16 & 9 & - \\
  \hline
  - & - & - & 1\\
  \hline
 \end{array}
\]
\end{example}

An immediate corollary of Theorem~\ref{thm:main} is a uniform boundedness result for the syzygies of the ideals $I_{a\times b}$: it is easy to see that for fixed $i$, the coefficient of $w^i z^j$ in $B_{a\times b}(z,w)$ is zero when $j$ is large enough, independent on the size $m\times n$ of our matrices. This is translated in \cite{nagpal-sam-snowden} into the fact that over the bivariate twisted commutative algebra $\mc{S}=\Sym(\bb{C}^{\infty}\oo\bb{C}^{\infty})$, the syzygy modules $\Tor_i^{\mc{S}}(I_{a\times b},\bb{C})$ have finite length, which is then used to derive a similar conclusion for the syzygy modules of any finitely generated $\mc{S}$-module. In the slightly different setup of $\Delta$-modules, a similar boundedness result for the syzygies of Segre embeddings \cite[Prop.~5.1]{snowden} has been used by Snowden to prove more refined finiteness properties for the said syzygies.

The proof of Theorem~\ref{thm:main} is based on the following two ingredients:
\begin{itemize}
 \item Joint work of the second author with Akin \cites{akin-weyman1,akin-weyman2}: they introduce and study a family of linear complexes $X^{r\times s}_{\bullet}$, arising via the BGG correspondence from certain simple modules over the superalgebra $\gl(m|n)$. The homology of these linear complexes consists entirely of direct sums of ideals $I_{(r+q)\times (s+q)}$. The polynomials $h_{r\times s}(z,w)$ introduced in (\ref{eq:defhrs}) precisely encode the terms of these linear complexes, or equivalently they encode the characters of the corresponding simple $\gl(m|n)$-modules.
 \item The recent work of the authors on computing local cohomology with support in determinantal ideals: in \cite{raicu-weyman} we compute all the modules $\Ext_S^{\bullet}(I_{a\times b},S)$, together with their $\GL$-equivariant structure.
\end{itemize}

Based on these two ingredients, our strategy is as follows. We obtain a non-minimal resolution of $I_{a\times b}$ via an iterated mapping cone construction involving the linear complexes $X^{(a+q)\times(b+q)}$, $q\geq 0$. We then use the $\GL$-equivariance to conclude that whenever cancellations occur for some of the terms of an $X^{r\times s}$, they must in fact occur for all the terms of $X^{r\times s}$. This implies that the minimal resolution of $I_{a\times b}$ is also built out of copies of $X^{(a+q)\times(b+q)}$, and it remains to determine the number of such copies, as well as their homological shifts. This is done by dualizing the minimal resolution and using the $\GL$-equivariant description of $\Ext^{\bullet}_S(I_{a\times b},S)$. We elaborate on this argument in Section~\ref{sec:mainthm}, after we establish some notational conventions in Section~\ref{sec:prelim}, and collect some preliminary results on functoriality of syzygies, on the complexes $X^{r\times s}$, and on the computation of $\Ext$ modules.

\section{Preliminaries}\label{sec:prelim}

\subsection{Representation Theory {\cite{ful-har}, \cite[Ch.~2]{weyman}}}\label{subsec:repthy}
If $W$ is a complex vector space of dimension $\dim(W)=n$, a choice of basis determines an isomorphism between $\GL(W)$ and the group $\GL_n(\bb{C})$ of $n\times n$ invertible matrices. We will refer to $n$--tuples $\ll=(\ll_1,\cdots,\ll_n)\in\bb{Z}^n$ as \defi{weights} of the corresponding maximal torus of diagonal matrices. We say that $\ll$ is a \defi{dominant weight} if $\ll_1\geq\ll_2\geq\cdots\geq\ll_n$. Irreducible representations of $\GL(W)$ are in one-to-one correspondence with dominant weights $\ll$. We denote by $S_{\ll}W$ the irreducible representation associated to $\ll$. We write $|\ll|$ for the total size $\ll_1+\cdots+\ll_n$ of $\ll$.
 
When $\ll$ is a dominant weight with $\ll_n\geq 0$, we say that $\ll$ is a \defi{partition} of $r=|\ll|$. We will often represent a partition via its associated \defi{Young diagram} which consists of left--justified rows of boxes, with $\ll_i$ boxes in the $i$--th row: for example, the Young diagram associated to $\ll=(5,2,1)$ is
\[\Yvcentermath1\yng(5,2,1)\]
Note that when we're dealing with partitions we often omit the trailing zeros. We define the \defi{length} of a partition $\ll$ to be the number of its non-zero parts, and denote it by $l(\ll)$. If $l(\ll)>\dim(W)$ then $S_{\ll}W=0$. The \defi{transpose} $\ll'$ of a partition $\ll$ is obtained by transposing the corresponding Young diagram. For the example above, $\ll'=(3,2,1,1,1)$, $l(\ll)=3$ and $l(\ll')=5$. If $\mu$ is another partition, we write $\mu\subset\ll$ to indicate that $\mu_i\leq\ll_i$ for all $i$, and say that $\mu$ is \defi{contained} in $\ll$.

For a pair of finite dimensional vector spaces $F,G$, we write $\GL(F,G)$ (or simply $\GL$ when $F,G$ are understood) for the group $\GL(F)\times\GL(G)$. If $M$ is a $\GL(F,G)$-representation, we write
\[\scpr{S_{\ll}F\oo S_{\mu}G}{M}\]
for the multiplicity of the irreducible $\GL$-representation $S_{\ll}F\oo S_{\mu}G$ inside $M$. If $M^{\bullet}$ is a cohomologically graded module, then we record the occurrences of $S_{\ll}F\oo S_{\mu}G$ inside the graded components of $M^{\bullet}$ by
\begin{equation}\label{eq:multsMbullet}
\scpr{S_{\ll}F\oo S_{\mu}G}{M^{\bullet}}=\sum_{i\in\bb{Z}}\scpr{S_{\ll}F\oo S_{\mu}G}{M^i}\cdot w^i,
\end{equation}
where the variable $w$ encodes the cohomological degree (note a slight difference from (\ref{eq:defequivBetti}), where $w$ was used for homological degree).

\subsection{Functoriality of syzygies}\label{subsec:functoriality}

It will be useful to think of the polynomial ring $S=\Sym(\bb{C}^m\oo\bb{C}^n)$ as a functor $S$ which assigns to a pair $(F,G)$ of finite dimensional vector spaces the polynomial ring $S(F,G)=\Sym(F\oo G)$. For each $a,b$ we obtain functors $I_{a\times b}$ which assign to $(F,G)$ the corresponding ideal $I_{a\times b}(F,G)\subset S(F,G)$. The syzygy modules in (\ref{eq:defsyzygies}) become functors $B_{i,j}^{a\times b}(-,-)$, defined by
\[B_{i,j}^{a\times b}(F,G)=\Tor_i^{S(F,G)}(I_{a\times b}(F,G),\bb{C})_j.\]
In fact, each $B_{i,j}^{a\times b}$ is a \defi{polynomial functor} in the sense of \cite[Ch.~I, Appendix A]{macdonald}. As such they decompose into a (usually infinite) direct sum indexed by pairs of partitions
\begin{equation}\label{eq:functBij}
B_{i,j}^{a\times b}(-,-)=\bigoplus_{|\ll|=|\mu|=j}(S_{\ll}(-)\oo S_{\mu}(-))^{\bigoplus m_{\ll,\mu}}. 
\end{equation}
When evaluating $B_{i,j}^{a\times b}$ on a pair of vector spaces $(F,G)$, the only terms on the right hand side of (\ref{eq:functBij}) that survive are the ones for which $l(\ll)\leq\dim(F)$ and $l(\mu)\leq\dim(G)$. The multiplicities $m_{\ll,\mu}$ for such pairs $(\ll,\mu)$ are then determined by the $\GL(F,G)$-equivariant structure of $B_{i,j}^{a\times b}(F,G)$. In particular, knowing the $\GL$-equivariant structure for the syzygies of $I_{a\times b}(\bb{C}^m,\bb{C}^n)$ determines the syzygies of $I_{a\times b}(F,G)$ for all pairs of vector spaces $(F,G)$ with $\dim(F)\leq m$, $\dim(G)\leq n$.

\subsection{The linear complexes $X^{r\times s}$ of Akin and Weyman}\label{subsec:akinweyman}

In \cites{akin-weyman1,akin-weyman2}, Akin and the second author construct linear complexes $X^{r\times s}=X^{r\times s}_{\bullet}(F,G)$ which depend functorially on a pair of finite dimensional vector spaces $(F,G)$. The terms in the complex are given (using notation (\ref{eq:defllrsab})) by
\begin{equation}\label{eq:defXrs}
X^{r\times s}_i(F,G)=\left(\bigoplus_{\substack{|\a|+|\b|=i \\ \a_1',\b_1\leq\min(r,s)}}S_{\ll(r,s;\a,\b)}F\oo S_{\ll(r,s;\b',\a')}G\right)\oo S(F,G).
\end{equation}
Note that since $S_{\ll}W=0$ when $l(\ll)>\dim(W)$, only finitely many of the terms $X^{r\times s}_i(F,G)$ in (\ref{eq:defXrs}) are non-zero for a given pair $(F,G)$. More precisely, we must have $\a_1\leq\dim(G)-r$, $\b_1'\leq\dim(F)-r$, so $|\a|\leq\min(r,s)\cdot(\dim(G)-r)$, $|\b|\leq\min(r,s)\cdot(\dim(F)-r)$, $i\leq\min(r,s)\cdot(\dim(F)+\dim(G)-2r)$. We can rewrite (\ref{eq:defhrs}) as
\[h_{r\times s}(z,w)=\sum_{i = 0}^{\min(r,s)\cdot(m+n-2\cdot r)}[X^{r\times s}_i(\bb{C}^m,\bb{C}^n)_{r\cdot s+i}]\cdot z^{r\cdot s+i}\cdot w^i,\]
where $X^{r\times s}_i(\bb{C}^m,\bb{C}^n)_{r\cdot s+i}$ is the vector space of minimal generators of the free module $X^{r\times s}_i(\bb{C}^m,\bb{C}^n)$. The complex $X^{(a+q)\times(1+q)}_{\bullet}(\bb{C}^m,\bb{C}^n)$ can be identified with the $q$-th linear strand of the Lascoux resolution of the ideal of $a\times a$ minors of the generic $m\times n$ matrix. In this paper we'll see that more generally, the complexes $X^{(a+q)\times (b+q)}$, $q\geq 0$, form the building blocks of the minimal resolutions of the ideals $I_{a\times b}$.

The complex $X^{r\times s}=X^{r\times s}_{\bullet}(\bb{C}^m,\bb{C}^n)$ corresponds via BGG to the irreducible $\mf{gl}(m|n)$-module of lowest weight $(s^r,0^{m-r}|s^r,0^{n-r})$ (thought of as a module over the exterior algebra). In \cite{akin-weyman2} the homology of the complexes $X^{r\times s}$ is shown to consist of direct sums of the rectangular ideals $I_{(r+q)\times (s+q)}$. To state this more precisely, we need to introduce some notation. We denote by $P(r,s;i)$ the number of partitions of $i$ contained in the $r\times s$ rectangle. The Gauss polynomial defined in (\ref{eq:defGausspoly}) is then
\[{r+s\choose r}_w=\sum_{i=0}^{r\cdot s} P(r,s;i)w^i.\]

\begin{theorem}[{\cite[Thm.~2]{akin-weyman2}}]\label{thm:homologyX}
With the above notation, the homology groups of $X^{r\times s}_{\bullet}$ are
\begin{enumerate}
\item $H_{2j+1}(X^{r\times s}_{\bullet})=0$;
\item $H_{2j}(X^{r\times s}_{\bullet})=\displaystyle\bigoplus_{q = 0}^j I_{(r+q)\times(s+q)}^{\oplus P(q,\min(r,s)-1;j-q)}$.
\end{enumerate}
\end{theorem}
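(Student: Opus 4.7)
The plan is to compute the homology of $X^{r\times s}_\bullet$ by combining an Euler-characteristic computation with a structural vanishing argument for odd homology. I would use the identification of $X^{r\times s}$ with the irreducible $\mf{gl}(m|n)$-module of highest weight $(s^r,0^{m-r}|0^{n-r},-s^r)$ from \cite{akin-weyman1} as the main source of additional structure, together with the De Concini--Eisenbud--Procesi description of the Schur content of $I_{a\times b}$.

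First I would compute the Euler characteristic $\sum_i(-1)^i[X^{r\times s}_i]$ in the Grothendieck group of $\GL$-equivariant $S$-modules. Since each $X^{r\times s}_i$ is free over $S$ with generators prescribed by (\ref{eq:defXrs}), this reduces to evaluating
\[ \chi \;=\; \left(\sum_{\a,\b} (-1)^{|\a|+|\b|}\, [S_{\ll(r,s;\a,\b)}F \oo S_{\ll(r,s;\b',\a')}G]\right) \cdot [\Sym(F\oo G)]. \]
Using Cauchy's formula for $[\Sym(F\oo G)]$ and Littlewood--Richardson cancellations, or equivalently the supercharacter identity for the irreducible $\mf{gl}(m|n)$-module $X^{r\times s}$ in terms of hook Schur functions, this alternating sum should simplify to $\sum_q d_q\,[I_{(r+q)\times(s+q)}]$ with $d_q=\sum_j P(q,\min(r,s)-1;j-q)$, matching the alternating sum of the claimed homology.

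The principal obstacle is showing $H_{2j+1}(X^{r\times s}_\bullet)=0$. The strategy I would pursue is to realize $X^{r\times s}_\bullet$ as the total complex of a bicomplex in which the two gradings separately track $|\a|$ and $|\b|$, with anticommuting differentials corresponding to the two components of the odd $\mf{gl}(m|n)$-action that enlarge $\a$ (the ``right extension'' of the rectangle) and $\b$ (the ``bottom extension''). A careful analysis of the explicit differentials from \cite{akin-weyman2} should make one spectral sequence of the bicomplex collapse at an early page, forcing the homology to be concentrated in even total degree. Alternatively, an involution on $X^{r\times s}_\bullet$ exchanging the roles of $\a$ and $\b$ (suitably twisted by signs) might exhibit the same parity concentration, via a Euler-characteristic refinement summing terms against a sign character. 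This is the most delicate step, since the differentials interact nontrivially through the Pieri-type rules governing how rows and columns attach to the central rectangle.

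With odd vanishing in hand, the class $\sum_j[H_{2j}]$ equals $\chi$ in the Grothendieck group, and the module-level decomposition follows because each $I_{(r+q)\times(s+q)}$ is uniquely characterized among equivariant $S$-modules by its minimal generator in the Schur component $S_{(r+q)\times(s+q)}F \oo S_{(r+q)\times(s+q)}G$, a term that does not occur in any of the other ideals in the sum. The refined claim that $I_{(r+q)\times(s+q)}$ contributes multiplicity $P(q,\min(r,s)-1;j-q)$ specifically in degree $2j$ is then read off by tracking the bidegree $|\a|+|\b|=2j$ within $\chi$, matching pairs $(\a,\b)$ that produce the generator of $I_{(r+q)\times(s+q)}$ with partitions of size $j-q$ contained in a $q\times(\min(r,s)-1)$ rectangle.
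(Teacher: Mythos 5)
The paper does not prove Theorem~\ref{thm:homologyX}; it imports it verbatim from \cite[Thm.~2]{akin-weyman2}, so there is no in-paper argument to compare yours against. Evaluating your plan on its own terms, two steps are genuine gaps rather than routine fill-ins. First, the odd vanishing $H_{2j+1}=0$ is the heart of the matter and your treatment of it is speculative: the differential in the Akin--Weyman complex comes from the odd part of $\mf{gl}(m|n)$, and there is no evident splitting of it into anticommuting ``$\a$-lowering'' and ``$\b$-lowering'' pieces (the way $\a$ and $\b$ change under the differential is governed by Littlewood--Richardson multiplicities, not by a clean tensor factorization that would produce a bicomplex), nor is there an obvious sign involution forcing parity concentration. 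Saying a spectral sequence ``should collapse'' is a restatement of the claim, not an argument.

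Second, and more fundamentally, even with odd vanishing in hand the graded, Schur-refined Euler characteristic only determines the \emph{sum} $\sum_j[H_{2j}]$ in the Grothendieck group, not the individual $[H_{2j}]$. Since the theorem asserts that the \emph{same} ideal $I_{(r+q)\times(s+q)}$ occurs in $H_{2j}$ for the whole range $q\le j\le q\cdot\min(r,s)$, the Euler characteristic only recovers the aggregate multiplicity ${q+\min(r,s)-1\choose q}$ summed over $j$, not its distribution $P(q,\min(r,s)-1;j-q)$ across homological degrees. Your proposal to ``track the bidegree $|\a|+|\b|=2j$ within $\chi$'' conflates the homological grading of the free terms $X^{r\times s}_i$ with the homological grading of their homology; the former is visible in $\chi$, the latter is exactly what passing to a Grothendieck group forgets. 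Pinning down where each copy of $I_{(r+q)\times(s+q)}$ actually sits requires identifying which cycles in $X^{r\times s}_{2j}$ represent its generator $S_{(s+q)^{r+q}}F\oo S_{(s+q)^{r+q}}G$ without being boundaries --- that analysis is the real content of \cite{akin-weyman2} and is not a consequence of your outline.
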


In \cite{akin-weyman2} the projective dimension of the ideals $I_{a\times b}$ is calculated. The calculation of $\Ext$ modules in \cite[Thm.~4.3]{raicu-weyman} in fact allows one to compute the projective dimension and regularity for all the ideals $I_{\ll}$, i.e the shape of their minimal resolution. More work is however necessary in order to completely determine the syzygies.

\subsection{The $\Ext$ modules $\Ext_S^{\bullet}(I_{a\times b},S)$}

In \cite[Theorem~4.3]{raicu-weyman} we determined the decomposition into irreducible $\GL$-representations for all the modules $\Ext_S^{\bullet}(I_{\ll},S)$. In the case when $\ll$ is a rectangular partition, we obtain the following consequence which will be useful for our calculation of syzygies.

\begin{theorem}\label{thm:ExtIab}
 Assume that $m=n$ and write $q=n-a$, $S=S(\bb{C}^n,\bb{C}^n)$, $I_{a\times b}=I_{a\times b}(\bb{C}^n,\bb{C}^n)$, $\GL=\GL(\bb{C}^n,\bb{C}^n)$. The occurrences of the irreducible $\GL$--representation $S_{(-b-q)^n}\bb{C}^n\oo S_{(-b-q)^n}\bb{C}^n$ inside\linebreak $\Ext_S^{\bullet}(I_{a\times b},S)$ (see (\ref{eq:multsMbullet})) are encoded as
\[\scpr{S_{(-b-q)^n}\bb{C}^n\oo S_{(-b-q)^n}\bb{C}^n}{\Ext_S^{\bullet}(I_{a\times b},S)}=w^{q^2+2q}\cdot{q+\min(a,b)-1 \choose q}_{w^2}.\]
\end{theorem}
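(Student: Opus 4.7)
The statement is a specialization of \cite[Thm.~4.3]{raicu-weyman}, which gives the complete $\GL$-equivariant decomposition of $\Ext_S^{\bullet}(I_{\ll},S)$ for every partition $\ll$. The plan is to read off that formula with $\ll = a \times b$ and $m=n$, and to extract only the contribution of the one-dimensional character $S_{(-b-q)^n}\bb{C}^n \oo S_{(-b-q)^n}\bb{C}^n$, which is $(\det)^{-b-q}\boxtimes(\det)^{-b-q}$ as a representation of $\GL(\bb{C}^n)\times\GL(\bb{C}^n)$.

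The general formula from \cite{raicu-weyman} expresses each isotypic component of $\Ext_S^{\bullet}(I_{a\times b},S)$ in terms of admissible combinatorial data (tuples of partitions subject to interlacing conditions), each placed in a prescribed homological degree computed by a Bott-type analysis on a product of flag varieties. The key step is to isolate the data producing both $\GL$-weights equal to the constant tuple $(-b-q)^n$. Because a scalar weight is extremely restrictive, I expect the admissible inputs to be in bijection with partitions $\nu$ contained in the $q\times(\min(a,b)-1)$ rectangle, where $q=n-a$ measures how far the rectangle $a\times b$ sits from filling up all $n$ rows. In particular, the two extreme cases $q=0$ (where only $\nu=\emptyset$ survives) and $q=n-a$ large should match the Akin--Buchsbaum--Weyman and Lascoux special cases referenced in the introduction.

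With this bijection identified, the remaining task is to compute the homological degree attached to each such $\nu$. I anticipate the answer to be $q^2+2q+2|\nu|$: the base shift $q^2+2q$ corresponds to $\nu=\emptyset$ and reflects a codimension-type contribution coming from the geometric resolution used in \cite{raicu-weyman}, while each box of $\nu$ adds $2$ to the degree. The factor of $2$ is forced by the Weyl-group combinatorics governing which cohomological degrees can contribute a scalar $\GL$-weight, and it is why only even-degree shifts appear. Summing over all such $\nu$ via definition \eqref{eq:defGausspoly} gives
\[ w^{q^2+2q}\sum_{\nu\subseteq q\times(\min(a,b)-1)} w^{2|\nu|} \;=\; w^{q^2+2q}\cdot{q+\min(a,b)-1\choose q}_{w^2}, \]
as claimed.

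The main obstacle is the combinatorial identification in the second step: one has to verify that, among all the indexing data appearing in \cite[Thm.~4.3]{raicu-weyman}, precisely those in bijection with partitions of the $q\times(\min(a,b)-1)$ rectangle produce the scalar weight $(-b-q)^n$ on both sides, and that the homological shift in each case is exactly $q^2+2q+2|\nu|$. This requires carefully unraveling the Bott-vanishing analysis used in \cite{raicu-weyman}, but once the matching is verified the generating-function identity is immediate from \eqref{eq:defGausspoly}.
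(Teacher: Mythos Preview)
Your approach matches the paper's: the paper does not prove this theorem but simply records it as a consequence of \cite[Thm.~4.3]{raicu-weyman}, and your plan to specialize that general $\Ext$-formula to $\ll=a\times b$, $m=n$, and extract the scalar isotypic component is exactly the intended derivation. Your anticipated bijection with partitions in the $q\times(\min(a,b)-1)$ rectangle and the homological shift $q^2+2q+2|\nu|$ are correct, so once you carry out the bookkeeping against the combinatorial data in \cite{raicu-weyman} the result follows.
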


\section{The syzygies of the ideals $I_{a\times b}$}\label{sec:mainthm}

We now proceed to state and prove the main result of our paper:

\begin{theorem}\label{thm:main}
 The equivariant Betti polynomial of $I_{a\times b}\subset\Sym(\bb{C}^m\oo\bb{C}^n)$, $m\geq n$, is
\[B_{a\times b}(z,w)=\sum_{q=0}^{n-a} h_{(a+q)\times(b+q)}\cdot w^{q^2+2q}\cdot{q+\min(a,b)-1\choose q}_{w^2},\]
where $h_{r\times s}=h_{r\times s}(z,w)$ is as defined in (\ref{eq:defhrs}).
\end{theorem}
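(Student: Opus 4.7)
The plan is to construct a (non-minimal) $\GL$-equivariant resolution of $I_{a\times b}$ from the linear complexes $X^{(a+q)\times(b+q)}_{\bullet}$, $q\geq 0$, and then pin down the minimal resolution using the $\Ext$ computation of Theorem~\ref{thm:ExtIab}. By the functoriality discussion of Section~\ref{subsec:functoriality}, every Schur summand appearing in the claimed formula for $B_{a\times b}$ has both row-counts bounded by $n$, so it suffices to work in the square case $m=n$; moreover, restricting along $\GL(\bb{C}^{a+q},\bb{C}^{a+q})\hookrightarrow\GL(\bb{C}^n,\bb{C}^n)$ allows one to feed $n=a+q$ into Theorem~\ref{thm:ExtIab} for any admissible value of $q$.

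The first step is an iterated mapping cone construction driven by Theorem~\ref{thm:homologyX}. The complex $X^{a\times b}_{\bullet}$ satisfies $H_0=I_{a\times b}$, has no odd homology, and its even homologies $H_{2j}$ for $j\geq 1$ are direct sums of deeper rectangular ideals $I_{(a+q)\times(b+q)}$ with $q\geq 1$. I would resolve each such ideal by patching in homologically shifted copies of $X^{(a+q)\times(b+q)}_{\bullet}$, whose higher homologies in turn drag in still deeper rectangles; the iteration terminates at $q=n-a$ because $X^{n\times(b+n-a)}_{\bullet}$ resolves $I_{n\times(b+n-a)}$ with no higher homology. Choosing $\GL$-equivariant lifts at each stage and forming the total complex of the resulting multicomplex yields an explicit (non-minimal) $\GL$-equivariant free resolution $F_{\bullet}$ of $I_{a\times b}$ assembled entirely from shifted copies of the $X^{(a+q)\times(b+q)}_{\bullet}$.

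The conceptual heart is to show that the minimal resolution is \emph{also} built from entire $X^{(a+q)\times(b+q)}_{\bullet}$ blocks, so that the cancellations incurred when minimizing $F_{\bullet}$ always remove complete blocks rather than fragments. Here I would invoke the fact recorded in Section~\ref{subsec:akinweyman} from \cite{akin-weyman1}: each $X^{r\times s}_{\bullet}(\bb{C}^m,\bb{C}^n)$ is the irreducible $\mf{gl}(m|n)$-module of highest weight $(s^r,0^{m-r}\mid 0^{n-r},-s^r)$, and distinct $(r,s)$ give inequivalent irreducibles. Any acyclic direct-sum summand of $F_{\bullet}$ that is removed during minimization is a $\GL$-equivariant (in fact $\mf{gl}(m|n)$-equivariant) subcomplex, and by $\mf{gl}(m|n)$-irreducibility it cannot contain a proper subset of the Schur terms of a single $X^{r\times s}_{\bullet}$ block. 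Thus cancellations occur block-wise, and the minimal resolution contains, for each $q\in\{0,1,\ldots,n-a\}$, some number of copies of $X^{(a+q)\times(b+q)}_{\bullet}$ at various homological shifts; I package this multiplicity data into a polynomial $N_q(w)\in\bb{Z}_{\geq 0}[w]$.

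Finally, to identify each $N_q(w)$ I would restrict to $\GL(\bb{C}^{a+q},\bb{C}^{a+q})$ and isolate a single $\GL$-isotypic component. By Theorem~\ref{thm:ExtIab} with $n=a+q$, the multiplicity of $S_{(-b-q)^{a+q}}\bb{C}^{a+q}\oo S_{(-b-q)^{a+q}}\bb{C}^{a+q}$ inside $\Ext^{\bullet}_S(I_{a\times b},S)$ equals $w^{q^2+2q}\cdot{q+\min(a,b)-1\choose q}_{w^2}$. A standard duality argument using minimality of the resolution identifies this with the homological generating function for the occurrences of the Schur summand $S_{((b+q)^{a+q})}\bb{C}^{a+q}\oo S_{((b+q)^{a+q})}\bb{C}^{a+q}$ among the minimal generators of the resolution. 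Inspecting (\ref{eq:defXrs}) shows that this summand appears exactly once in $X^{(a+q)\times(b+q)}_{\bullet}$, as the $\a=\b=\emptyset$ term in homological degree $0$ of the block, and does not appear in any $X^{(a+q')\times(b+q')}_{\bullet}$ with $q'<q$, since the required $\b_1=b+q$ would violate the constraint $\b_1\leq\min(a+q',b+q')$. Matching multiplicities then forces $N_q(w)=w^{q^2+2q}\cdot{q+\min(a,b)-1\choose q}_{w^2}$, completing the proof of Theorem~\ref{thm:main}. The main obstacle is the block-cancellation step, where the $\mf{gl}(m|n)$-module structure must be wielded carefully to preclude partial cancellations; everything else amounts to careful bookkeeping with Schur functors and internal degrees.
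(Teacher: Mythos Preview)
Your plan is the paper's plan: reduce to $m=n$ by functoriality, build a non-minimal $\GL$-equivariant resolution by iterated mapping cones on the $X^{(a+q)\times(b+q)}_{\bullet}$ using Theorem~\ref{thm:homologyX}, argue that minimization only removes whole blocks, and then read off the block multiplicities from $\Ext^\bullet_S(I_{a\times b},S)$ after specializing to $n=a+q$. Steps one, two, and four are handled just as in the paper.

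The genuine soft spot is the block-cancellation step. You claim that an acyclic summand removed during minimization is ``in fact $\mf{gl}(m|n)$-equivariant'' and then invoke the $\mf{gl}(m|n)$-irreducibility of $X^{r\times s}$. But the mapping-cone resolution $F_\bullet$ carries no evident $\mf{gl}(m|n)$-module structure: the connecting maps you lifted are only $\GL$-equivariant $S$-linear chain maps, and there is no reason they intertwine with the \emph{other} odd half of $\mf{gl}(m|n)$. So nothing forces a splitting $F_\bullet\cong A_\bullet\oplus B_\bullet$ to respect any super Lie algebra action, and $\mf{gl}(m|n)$-irreducibility does not directly constrain the pieces. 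The paper fills this gap with Lemma~\ref{lem:EndXrs}: every $\GL$-equivariant degree-preserving chain endomorphism of $X^{r\times s}_{\bullet}$ is a scalar, proved by chasing the (nonzero) differential component out of each $X^{r\times s}_{\a,\b}$ using Theorem~\ref{thm:homologyX}. That is the correct Schur-type statement here, because the degree-$0$ part of the differential between two shifted copies of the same $X^{r\times s}$ is exactly a $\GL$-equivariant chain map; once it is scalar, cancellations are forced to be block-wise.

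One smaller point on the last step: your ``standard duality argument using minimality'' is asserting that the multiplicity of $S_{(-b-q)^n}\bb{C}^n\otimes S_{(-b-q)^n}\bb{C}^n$ in $\Ext^\bullet$ equals the multiplicity of $S_{(b+q)^n}\bb{C}^n\otimes S_{(b+q)^n}\bb{C}^n$ among the minimal generators. This is not automatic; beyond checking that the latter irreducible does not occur as a generator of any $X^{(a+q')\times(b+q')}$ with $q'<q$ (which you did), one must rule out the dual irreducible appearing anywhere in $M^*\otimes S$ for the other generators $M$, so that it can neither be killed as a coboundary nor contribute spurious cocycles. The paper handles this by noting that $S_{(-b-q)^n}\otimes S_{(-b-q)^n}\subset M^*\otimes S$ forces $M=S_\lambda\otimes S_\mu$ with $\lambda,\mu\supset(b+q)^n$, and your row-length bound on $\beta_1$ already excludes this for $q'<q$; but that containment argument, together with the observation that no maps go into $X^{n\times(b+q)}$, should be made explicit rather than absorbed into ``standard duality''.
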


We prove Theorem \ref{thm:main} in a few stages. We first note that by functoriality (Section~\ref{subsec:functoriality}) it is enough to prove the theorem in the case $m=n$, which we assume for the remainder of this section. For brevity, we will say that a complex $Y$ is \defi{filtered by the linear complexes} $X^i$ if $Y$ admits a filtration by subcomplexes in such a way that the subquotients are isomorphic to $X^i$. Equivalently, $Y$ can be built out of $X^i$ via an iterated mapping cone construction. We have

\begin{proposition}\label{prop:nonminresI}
The ideal $I_{a\times b}$ has a (not necessarily minimal) free $\GL$-equivariant resolution over $S$, denoted $Y^{a\times b}$, which is filtered by the complexes $X^{(a+q)\times (b+q)}$.
\end{proposition}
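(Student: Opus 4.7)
The plan is to proceed by downward induction on $a$, with base case $a=n$. When $a=n$, the constraint $l(\ll)\leq n$ forces $I_{(n+q)\times(b+q)}=0$ for every $q\geq 1$, so Theorem~\ref{thm:homologyX} gives $H_0(X^{n\times b}_{\bullet})=I_{n\times b}$ and $H_i(X^{n\times b}_{\bullet})=0$ for all $i\geq 1$ (odd homology already vanishes). Hence $X^{n\times b}_{\bullet}$ is itself a $\GL$-equivariant free resolution of $I_{n\times b}$, trivially filtered by the single complex $X^{n\times b}$.

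For the inductive step, fix $a<n$ and assume that for every $q\geq 1$ the ideal $I_{(a+q)\times(b+q)}$ admits a $\GL$-equivariant free resolution $F^{(q)}_{\bullet}$ filtered by the complexes $X^{(a+q+q')\times(b+q+q')}$, $q'\geq 0$. Start with $X^{a\times b}_{\bullet}$. By Theorem~\ref{thm:homologyX} its homology in degree $0$ is $I_{a\times b}$, and in every higher (necessarily even) degree it is a finite direct sum of ideals $I_{(a+q)\times(b+q)}$ with $q\geq 1$; moreover only finitely many degrees contribute, since the bounds on $\a,\b$ in (\ref{eq:defXrs}) already bound the length of $X^{a\times b}_{\bullet}$. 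The plan is to kill off this spurious homology from the top down by iterated mapping cones.

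Concretely, let $2J$ be the largest degree in which $H_{2J}(X^{a\times b}_{\bullet})\neq 0$, and write $H_{2J}(X^{a\times b}_{\bullet})=\bigoplus_{q\geq 1} I_{(a+q)\times(b+q)}^{\oplus m_q}$. The corresponding direct sum of the $F^{(q)}_{\bullet}$'s is a $\GL$-equivariant resolution $G_{\bullet}$ of $H_{2J}$. Using semisimplicity of $\GL$-representations to choose all lifts equivariantly, lift the identity on $H_{2J}$ to a $\GL$-equivariant chain map $\phi\colon G_{\bullet}[2J]\to X^{a\times b}_{\bullet}$ (with $G_i$ placed in homological degree $i+2J$). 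The long exact sequence of the mapping cone, combined with $H_{2J+1}(X^{a\times b}_{\bullet})=0$, shows that $\text{Cone}(\phi)$ has the same homology as $X^{a\times b}_{\bullet}$ except that $H_{2J}$ has been killed and no new homology is introduced. Iterating this procedure finitely many times produces a $\GL$-equivariant free resolution of $I_{a\times b}$. By construction the successive mapping cones glue together $X^{a\times b}_{\bullet}$ with (shifts of) the complexes in the resolutions $F^{(q)}_{\bullet}$, so after re-indexing $q''=q+q'$ the resulting resolution is filtered by $X^{(a+q)\times(b+q)}$ for $0\leq q\leq n-a$, as claimed.

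The main obstacle is not any single step but rather the bookkeeping: one must verify that at every stage the homology that remains is still a direct sum of ideals $I_{(a+q)\times(b+q)}$ with $q\geq 1$ (so that the inductive hypothesis continues to apply), and that the chain maps lifting identities on homology can be chosen so that the filtration by the $X^{(a+q)\times(b+q)}$'s is preserved at each step. Both of these are routine consequences of the semisimplicity of $\GL$-representations and the vanishing of the odd homology of $X^{r\times s}_{\bullet}$, but they are the reason the proposition is true in this clean form rather than producing some more complicated filtration.
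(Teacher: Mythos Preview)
Your proof is correct and follows essentially the same approach as the paper: descending induction (the paper indexes by $q$ with $a,b$ fixed, you equivalently index by $a$) together with an iterated mapping-cone construction, using Theorem~\ref{thm:homologyX} to identify the higher homology of $X^{a\times b}_{\bullet}$ with direct sums of rectangular ideals already covered by the inductive hypothesis. Your top-down killing of homology and explicit appeal to the mapping-cone long exact sequence spell out the bookkeeping more carefully than the paper's terse version, but the underlying argument is the same.
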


\begin{proof} We prove by descending induction on $q$ that $I_{(a+q)\times (b+q)}$ admits a (not necessarily minimal) resolution $Y^{(a+q)\times(b+q)}$ which is filtered by complexes $X^{(a+q')\times(b+q')}$ with $q'\geq q$. If $q=n-a$ then $I_{(a+q)\times(b+q)}=I_{n\times (b+n-a)}$ coincides with $X^{n\times(b+n-a)}$: they are both isomorphic to a free module of rank one, generated by the $(b+n-a)$-th power of the determinant of the generic $n\times n$ matrix. Assuming now that the result is true for the ideals $I_{(a+q)\times (b+q)}$ with $q>q_0$, we'll prove it for $q=q_0$ to finish the inductive argument. By Theorem~\ref{thm:homologyX} the higher homology of the linear complex $X^{(a+q_0)\times(b+q_0)}$ consists of direct sums of ideals $I_{(a+q)\times(b+q)}$, $q>q_0$, and $H_0(X^{(a+q_0)\times(b+q_0)})=I_{(a+q_0)\times(b+q_0)}$. We can therefore construct a resolution $Y^{(a+q_0)\times(b+q_0)}$ of $I_{(a+q_0)\times(b+q_0)}$ as a mapping cone of the maps from the complexes $Y^{(a+q)\times (b+q)}$, $q>q_0$, to the complex $X^{(a+q_0)\times(b+q_0)}$ that cancel its higher homology.
\end{proof}

Let $Y^{a\times b}$ be a non-minimal $\GL$-equivariant resolution of the ideal $I_{a\times b}$ as in Proposition~\ref{prop:nonminresI}. We can minimize $Y^{a\times b}$ by making appropriate cancellations.
Notice that since the generators of the free modules appearing in $X^{(a+q)\times(b+q)}$ and $X^{(a+q')\times(b+q')}$ don't share isomorphic irreducible $\GL$-subrepresentations for $q\neq q'$, the only cancellations that can occur are between the terms in various copies of the same $X^{(a+q)\times(b+q)}$.

\begin{lemma}\label{lem:EndXrs}
 Any $\GL(F,G)$-equivariant endomorphism of $X^{r\times s}_{\bullet}(F,G)$ is a multiple of the identity.
\end{lemma}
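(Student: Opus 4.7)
The plan is to pin down a scalar $c$ from the homological-degree-zero generators, then propagate $\phi = c\cdot\operatorname{id}$ to higher degrees by induction, using the chain-map condition together with Theorem~\ref{thm:homologyX} to prevent alternative scalars.

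For the base case, only $\a=\b=\emptyset$ contributes at homological degree $0$ in (\ref{eq:defXrs}), so the minimal generator space of $X^{r\times s}_0(F,G)$ is a single irreducible $\GL$-representation $S_{(s^r)}F \oo S_{(s^r)}G$. Schur's lemma produces the scalar $c$, and $S$-linearity extends $\phi_0 = c\cdot\operatorname{id}$ to all of $X^{r\times s}_0$.

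For the inductive step, assume $\phi_{i-1} = c\cdot\operatorname{id}$. The minimal generator space $W_i$ of $X^{r\times s}_i$ decomposes into pairwise non-isomorphic $\GL$-irreducibles $V = S_{\ll(r,s;\a,\b)}F \oo S_{\ll(r,s;\b',\a')}G$ (the partition $\ll(r,s;\a,\b)$ determines $(\a,\b)$ under the constraints $\a_1', \b_1 \leq \min(r,s)$), and $\phi_i$ acts on each $V$ by a Schur scalar $c_V$. The chain-map condition combined with the inductive hypothesis yields $(c_V - c) \cdot d_i|_V = 0$, so it remains to exclude the case $d_i|_V = 0$. Such a $V$ would sit at minimal internal degree $rs+i$ (hence outside the image of $d_{i+1}$) and thus embed $\GL$-equivariantly into $H_i(X^{r\times s})$; by Theorem~\ref{thm:homologyX} combined with the description $I_\mu = \bigoplus_{\ll \supseteq \mu} S_\ll F \oo S_\ll G$ from \cite{DCEP}, this embedding would force $\ll(r,s;\a,\b) = \ll(r,s;\b',\a')$ (equivalently $\b = \a'$) and the common partition to contain $((s+q)^{r+q})$ for some $q \geq 1$. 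But the $(r+1)$-st part of $\ll(r,s;\a,\a')$ equals $\a_1'$, which is bounded by $\min(r,s) \leq s < s+q$, contradicting the containment. Hence $d_i|_V \neq 0$ and $c_V = c$.

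The main obstacle is this shape-theoretic verification: one must carefully check that no summand $V$ of the minimal generators of $X^{r\times s}_i$ can survive as a nontrivial $\GL$-subrepresentation of the homology, crucially using the constraints built into (\ref{eq:defXrs}) and the classification of $\GL$-equivariant ideals. An alternative conceptual route bypasses this case analysis by invoking the irreducibility of $X^{r\times s}$ as a $\mf{gl}(m|n)$-module from \cite{akin-weyman1}: one would identify any $\GL$-equivariant chain-map endomorphism with a $\mf{gl}(m|n)$-module endomorphism and apply super Schur directly, although this then requires justifying that $\GL$-equivariance together with the chain-map condition promotes $\phi$ to the full $\mf{gl}(m|n)$-action, whose odd part splits into two $\GL$-irreducible pieces $M$ and $M^*$.
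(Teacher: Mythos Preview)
Your proof is correct and follows essentially the same approach as the paper: induct on homological degree, extract a Schur scalar on each irreducible summand of the minimal generators, and use the chain-map condition together with Theorem~\ref{thm:homologyX} to force all scalars to agree. Your shape-theoretic verification that no generator $V$ can survive in positive homology (via $\ll(r,s;\a,\b)=\ll(r,s;\b',\a')\Rightarrow\b=\a'$ and the bound on the $(r{+}1)$-st part) is more explicit than the paper's, which simply asserts that such a contribution would contradict Theorem~\ref{thm:homologyX} without spelling out the partition comparison.
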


\begin{proof} Let $\psi$ denote a $\GL$-equivariant endomorphism of $X^{r\times s}$, and write $\psi_i$ for its component in homological degree $i$. By $\GL$-equivariance and using the decomposition (\ref{eq:defXrs}), we have $\psi_i=\bigoplus_{\a,\b}\psi_{\a,\b}$, where $\psi_{\a,\b}$ is the restriction of $\psi_i$ to the free submodule $X^{r\times s}_{\a,\b}$ generated by the irreducible representation $S_{\ll(r,s;\a,\b)}F\oo S_{\ll(r,s;\b',\a')}G$. Such an endomorphism is necessarily a multiple of the identity. Writing $\psi_{\a,\b}=\cdot c_{\a,\b}$, it suffices to show that all $c_{\a,\b}$ are the same. We prove this by induction on $i=|\a|+|\b|$.

Consider $(\a,\b)$ with $i=|\a|+|\b|>0$, and consider a pair $(\ol{\a},\ol{\b})$ with $|\ol{\a}|+|\ol{\b}|=i-1$, such that the restriction of the differential $\partial_i:X^{r\times s}_i\to X^{r\times s}_{i-1}$ to
\[X^{r\times s}_{\a,\b}\overset{\partial_i}{\lra}X^{r\times s}_{\ol{\a},\ol{\b}}\]
is non-zero: such a pair exists since otherwise $S_{\ll(r,s;\a,\b)}F\oo S_{\ll(r,s;\b',\a')}G$ would contribute to the homology of $X^{r\times s}_{\bullet}$ (note that this representation is not a coboundary, since the complex $X^{r\times s}_{\bullet}$ is minimal), which would contradict Theorem~\ref{thm:homologyX}. Since $\psi$ commutes with the differentials, we have a commutative diagram
\[
 \xymatrix{
  X^{r\times s}_{\a,\b} \ar[r]^{\partial_i} \ar[d]_{\cdot c_{\a,\b}} & X^{r\times s}_{\ol{\a},\ol{\b}} \ar[d]^{\cdot c_{\ol{\a},\ol{\b}}} \\
  X^{r\times s}_{\a,\b} \ar[r]^{\partial_i} & X^{r\times s}_{\ol{\a},\ol{\b}} \\
}
\]
Since $\partial_i\neq 0$, it follows that $c_{\a,\b}=c_{\ol{\a},\ol{\b}}$, and we conclude by induction.
\end{proof}

The preceding discussion implies the following

\begin{corollary}\label{cor:minresIab}
The minimal resolution of $I_{a\times b}$ is filtered by the complexes $X^{(a+q)\times (b+q)}$, $q\geq 0$. In particular, there exist polynomials $M_{a\times b}^q(w)$ which account for the multiplicities of the complexes $X^{(a+q)\times(b+q)}$ in the minimal resolution of $I_{a\times b}$, as well as for their homological shifts, i.e.
\[B_{a\times b}(z,w)=\sum_{q=0}^{n-a} h_{(a+q)\times(b+q)}\cdot M_{a\times b}^q(w).\]
\end{corollary}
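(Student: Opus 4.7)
The plan is to iteratively minimize the non-minimal resolution $Y^{a\times b}$ constructed in Proposition~\ref{prop:nonminresI} while showing that the minimization process preserves the decomposition into copies of the complexes $X^{(a+q)\times(b+q)}$. Ignoring differentials, $Y^{a\times b}$ decomposes into a direct sum $\bigoplus_{q,k}X^{(a+q)\times(b+q)}[-k]^{\oplus n_{q,k}}$ as a graded $\GL$-equivariant free module; the goal is to show that the unit-entry cancellations required to reach the minimal resolution always remove entire pairs of such summands, leaving behind a resolution of the same form (with smaller multiplicities).

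First I would observe, as noted in the paragraph preceding Lemma~\ref{lem:EndXrs}, that the irreducible $\GL$-representations generating $X^{(a+q)\times(b+q)}$ for distinct values of $q$ are pairwise disjoint. Consequently, every $\GL$-equivariant unit entry in the differential of $Y^{a\times b}$ links two copies of the \emph{same} $X^{r\times s}$, with $r=a+q$ and $s=b+q$. A quick internal-degree count, using that $X^{r\times s}_i$ is generated in internal degree $r\cdot s+i$, shows that degree preservation forces the two copies to have homological shifts differing by exactly one, say $X^{r\times s}[-k]$ and $X^{r\times s}[-k+1]$. The off-diagonal component of the differential between them is then a $\GL$-equivariant internal-degree-preserving chain endomorphism of $X^{r\times s}$, which Lemma~\ref{lem:EndXrs} forces to be a scalar multiple of the identity.

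The decisive step is the following ``all or nothing'' observation: if the scalar is a unit, the pair $X^{r\times s}[-k]\oplus X^{r\times s}[-k+1]$ equipped with this comparison map realises the mapping cone of an isomorphism, hence is a contractible direct summand of $Y^{a\times b}$ (after a suitable $\GL$-equivariant change of basis to decouple it from the rest of the differential). Removing it therefore yields a smaller resolution of the same shape. Iterating exhausts all unit entries and leaves a minimal resolution with underlying graded module $\bigoplus_{q,k}X^{(a+q)\times(b+q)}[-k]^{\oplus m_{q,k}}$; setting $M_{a\times b}^q(w)=\sum_k m_{q,k}\cdot w^k$ and using the identification after (\ref{eq:defXrs}) of $h_{(a+q)\times(b+q)}(z,w)$ with the equivariant generating polynomial of $X^{(a+q)\times(b+q)}$ then gives the claimed decomposition.

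The main obstacle is precisely this ``all or nothing'' claim: a priori, a minimization might cancel some terms of a copy of $X^{r\times s}$ without cancelling others, spoiling the filtration. Lemma~\ref{lem:EndXrs} is what prevents this, because the single scalar it produces controls the entire comparison map: if it is a unit on one irreducible piece, it is a unit on every piece, forcing the two copies to be deleted simultaneously. Once this is in hand, the remainder of the argument is bookkeeping.
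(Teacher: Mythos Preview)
Your proposal is correct and follows exactly the route the paper intends: minimize the iterated mapping cone resolution $Y^{a\times b}$ from Proposition~\ref{prop:nonminresI}, observe via the disjointness of irreducibles that unit entries can only link two copies of the same $X^{(a+q)\times(b+q)}$ in adjacent homological positions, and invoke Lemma~\ref{lem:EndXrs} to force such a comparison to be a scalar, hence an ``all or nothing'' cancellation. The only point where you move a bit fast is asserting that the off-diagonal component is automatically a \emph{chain} endomorphism (so that Lemma~\ref{lem:EndXrs} applies); this is what the paper is implicitly using as well, and it holds here because $d^{2}=0$ together with the degree and $\GL$-isotypic constraints leaves no room for a nonzero commutator with $d_{X}$.
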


\begin{proof}
 By Proposition~\ref{prop:nonminresI}, $I_{a\times b}$ admits a non-minimal resolution $Y^{a\times b}$, filtered by the complexes $X^{(a+q)\times(b+q)}$. To get to the minimal resolution of $I_{a\times b}$, one must perform appropriate cancellations, which can only occur between copies of the same $X^{(a+q)\times(b+q)}$. By Lemma~\ref{lem:EndXrs}, such cancellations occur either for all the terms in $X^{(a+q)\times(b+q)}$, or for none. This shows that the minimal resolution of $I_{a\times b}$ is filtered by (possibly fewer) copies of $X^{(a+q)\times(b+q)}$, from which the remaining part of the conclusion follows formally.
\end{proof}

We are now ready to prove the main result of the paper:

\begin{proof}[Proof of Theorem~\ref{thm:main}]
 It remains to calculate the polynomials $M_{a\times b}^q(w)$. We fix $q$ and shrink $n$ if necessary to assume that $n=a+q$ (see Section~\ref{subsec:functoriality}), so $X^{(a+q)\times(b+q)}=X^{n\times(b+q)}$ consists of a single free module, generated by the irreducible $\GL$-representation $S_{(b+q)^n}\bb{C}^n\oo S_{(b+q)^n}\bb{C}^n$. Dualizing the minimal resolution $Y$ of $I_{a\times b}$ and computing the cohomology $\Ext_S^{\bullet}(I_{a\times b},S)$ of the resulting complex $Y^{\vee}$, we get
\begin{enumerate}
 \item[(a)] each occurrence of $X^{n\times(b+q)}$ in $Y$ yields a copy of $S_{(-b-q)^n}\bb{C}^n\oo S_{(-b-q)^n}\bb{C}^n$ in $\Ext_S^{\bullet}(I_{a\times b},S)$;
 \item[(b)] the only occurrences of $S_{(-b-q)^n}\bb{C}^n\oo S_{(-b-q)^n}\bb{C}^n$ inside $\Ext_S^{\bullet}(I_{a\times b},S)$ arise in this way.
\end{enumerate}
To prove (a), note that there are no non-zero maps going into the free module $X^{n\times(b+q)}$, so its dual $\Hom_S(X^{n\times(b+q)},S)$ will consists entirely of cocycles in $Y^{\vee}$. Since $Y^{\vee}$ is minimal, the space $S_{(-b-q)^n}\bb{C}^n\oo S_{(-b-q)^n}\bb{C}^n$ of minimal generators of $(X^{n\times(b+q)})^{\vee}=\Hom_S(X^{n\times(b+q)},S)$ contains no coboundaries, so (a) follows. If (b) failed, one could find a free submodule $M^*\oo S$ in $Y^{\vee}$, containing $S_{(-b-q)^n}\bb{C}^n\oo S_{(-b-q)^n}\bb{C}^n$, where $M$ is an irreducible $\GL$-representation appearing as a subspace of minimal generators in some complex $X^{(a+q')\times(b+q')}$, $q'<q$. The condition $S_{(-b-q)^n}\bb{C}^n\oo S_{(-b-q)^n}\bb{C}^n\subset M^*\oo S$ implies that $M$ appears as a subrepresentation of $S_{(b+q)^n}\bb{C}^n\oo S_{(b+q)^n}\bb{C}^n \oo S$. This can only happen if $M=S_{\ll}\bb{C}^n\oo S_{\mu}\bb{C}^n$, where $\ll,\mu$ are partitions containing the $n\times(b+q)$ rectangle. By (\ref{eq:defllrsab}), $M$ can only occur inside $X^{n\times(b+q)}$.

It follows from (a) and (b) that there is a one-to-one correspondence between occurrences of $X^{n\times(b+q)}$ inside $Y$ and those of $S_{(-b-q)^n}\bb{C}^n\oo S_{(-b-q)^n}\bb{C}^n$ inside $\Ext_S^{\bullet}(I_{a\times b},S)$, and moreover this correspondence replaces homological shifts with cohomological shifts. We get (see (\ref{eq:multsMbullet}) and the remark following it) that
\[M_{a\times b}^q(w)=\scpr{S_{(-b-q)^n}\bb{C}^n\oo S_{(-b-q)^n}\bb{C}^n}{\Ext_S^{\bullet}(I_{a\times b},S)}\overset{Thm.~\ref{thm:ExtIab}}{=}w^{q^2+2q}\cdot{q+\min(a,b)-1 \choose q}_{w^2}.\]
This concludes the proof of Theorem~\ref{thm:main}.
\end{proof}

\section*{Acknowledgments} 
This work was initiated while we were visiting the Mathematical Sciences Research Institute, for whose hospitality we are grateful. Experiments with the computer algebra software Macaulay2 \cite{M2} have provided numerous valuable insights. Raicu acknowledges the support of NSF grant DMS-1458715. Weyman acknowledges the support of the Alexander von Humboldt Foundation and of NSF grant DMS-1400740.


	\begin{bibdiv}
		\begin{biblist}

\bib{akin-buchsbaum-weyman}{article}{
   author={Akin, Kaan},
   author={Buchsbaum, David A.},
   author={Weyman, Jerzy},
   title={Resolutions of determinantal ideals: the submaximal minors},
   journal={Adv. in Math.},
   volume={39},
   date={1981},
   number={1},
   pages={1--30},
   issn={0001-8708},
   review={\MR{605350 (82h:13011)}},
   doi={10.1016/0001-8708(81)90055-4},
}

\bib{akin-weyman1}{article}{
author={Akin, Kaan},
author={Weyman, Jerzy},
title={Minimal Free Resolutions of Determinantal Ideals and Irreducible Representations of the Lie Superalgebra $\underline{gl}(m|n)$},
journal={J. of Algebra},
volume={197},
date={1997},
pages={559-583}
}

\bib{akin-weyman2}{article}{
author={Akin, Kaan},
author={Weyman, Jerzy},
title={Primary ideals associated to liear strands of Lascoux's resolution and syzygies of the corresponding irreducible representations of the Superalgebra $\underline{gl}(m|n)$},
journal={J. of Algebra},
volume={310},
date={2007},
pages={461-490}
}

\bib{brundan}{article}{
   author={Brundan, Jonathan},
   title={Kazhdan-Lusztig polynomials and character formulae for the Lie
   superalgebra $\gl(m|n)$},
   journal={J. Amer. Math. Soc.},
   volume={16},
   date={2003},
   number={1},
   pages={185--231},
   issn={0894-0347},
   review={\MR{1937204 (2003k:17007)}},
   doi={10.1090/S0894-0347-02-00408-3},
}

\bib{DCEP}{article}{
   author={de Concini, C.},
   author={Eisenbud, David},
   author={Procesi, C.},
   title={Young diagrams and determinantal varieties},
   journal={Invent. Math.},
   volume={56},
   date={1980},
   number={2},
   pages={129--165},
   issn={0020-9910},
   review={\MR{558865 (81m:14034)}},
   doi={10.1007/BF01392548},
}


\bib{eis-syz}{book}{
   author={Eisenbud, David},
   title={The geometry of syzygies},
   series={Graduate Texts in Mathematics},
   volume={229},
   note={A second course in commutative algebra and algebraic geometry},
   publisher={Springer-Verlag, New York},
   date={2005},
   pages={xvi+243},
   isbn={0-387-22215-4},
   review={\MR{2103875 (2005h:13021)}},
}

\bib{ful-har}{book}{
   author={Fulton, William},
   author={Harris, Joe},
   title={Representation theory},
   series={Graduate Texts in Mathematics},
   volume={129},
   note={A first course;
   Readings in Mathematics},
   publisher={Springer-Verlag},
   place={New York},
   date={1991},
   pages={xvi+551},
   isbn={0-387-97527-6},
   isbn={0-387-97495-4},
   review={\MR{1153249 (93a:20069)}},
}

\bib{hughes-king-jeugt}{article}{
   author={Hughes, J. W. B.},
   author={King, R. C.},
   author={Van der Jeugt, J.},
   title={On the composition factors of Kac modules for the Lie
   superalgebras ${\rm sl}(m/n)$},
   journal={J. Math. Phys.},
   volume={33},
   date={1992},
   number={2},
   pages={470--491},
   issn={0022-2488},
   review={\MR{1145343 (93a:17003)}},
   doi={10.1063/1.529782},
}

\bib{M2}{article}{
          author = {Grayson, Daniel R.},
          author = {Stillman, Michael E.},
          title = {Macaulay 2, a software system for research
                   in algebraic geometry},
          journal = {Available at \url{http://www.math.uiuc.edu/Macaulay2/}}
        }

\bib{kac}{article}{
   author={Kac, V. G.},
   title={Lie superalgebras},
   journal={Advances in Math.},
   volume={26},
   date={1977},
   number={1},
   pages={8--96},
   issn={0001-8708},
   review={\MR{0486011 (58 \#5803)}},
}

\bib{lascoux}{article}{
   author={Lascoux, Alain},
   title={Syzygies des vari\'et\'es d\'eterminantales},
   language={French},
   journal={Adv. in Math.},
   volume={30},
   date={1978},
   number={3},
   pages={202--237},
   issn={0001-8708},
   review={\MR{520233 (80j:14043)}},
   doi={10.1016/0001-8708(78)90037-3},
}

\bib{macdonald}{book}{
   author={Macdonald, I. G.},
   title={Symmetric functions and Hall polynomials},
   series={Oxford Mathematical Monographs},
   edition={2},
   note={With contributions by A. Zelevinsky;
   Oxford Science Publications},
   publisher={The Clarendon Press Oxford University Press},
   place={New York},
   date={1995},
   pages={x+475},
   isbn={0-19-853489-2},
   review={\MR{1354144 (96h:05207)}},
}

\bib{nagpal-sam-snowden}{article}{
   author={Nagpal, Rohit},
   author={Sam, Steven},
   author={Snowden, Andrew},
   title={Notherianity of some degree two TCA's},
   journal={Selecta Math. (N.S.)},
   doi={10.1007/s00029-015-0205-y},
   note={Available at arXiv:1501.06925}
}

\bib{raicu-weyman}{article}{
   author={Raicu, Claudiu},
   author={Weyman, Jerzy},
   title={Local cohomology with support in generic determinantal ideals},
   journal={Algebra Number Theory},
   volume={8},
   date={2014},
   number={5},
   pages={1231--1257},
   issn={1937-0652},
   review={\MR{3263142}},
   doi={10.2140/ant.2014.8.1231},
}

\bib{serganova}{article}{
   author={Serganova, Vera},
   title={Kazhdan-Lusztig polynomials and character formula for the Lie
   superalgebra $\gl(m|n)$},
   journal={Selecta Math. (N.S.)},
   volume={2},
   date={1996},
   number={4},
   pages={607--651},
   issn={1022-1824},
   review={\MR{1443186 (98f:17007)}},
   doi={10.1007/PL00001385},
}

\bib{snowden}{article}{
   author={Snowden, Andrew},
   title={Syzygies of Segre embeddings and $\Delta$-modules},
   journal={Duke Math. J.},
   volume={162},
   date={2013},
   number={2},
   pages={225--277},
   issn={0012-7094},
   review={\MR{3018955}},
   doi={10.1215/00127094-1962767},
}

\bib{su-hughes-king}{article}{
   author={Su, Yucai},
   author={Hughes, J. W. B.},
   author={King, R. C.},
   title={Primitive vectors of Kac-modules of the Lie superalgebras $\mf{sl}(m|n)$},
   journal={J. Math. Phys.},
   volume={41},
   date={2000},
   number={7},
   pages={5064--5087},
   issn={0022-2488},
   review={\MR{1765833 (2001f:17013)}},
   doi={10.1063/1.533392},
}
	
\bib{su}{article}{
   author={Su, Yucai},
   title={Composition factors of Kac modules for the general linear Lie
   superalgebras},
   journal={Math. Z.},
   volume={252},
   date={2006},
   number={4},
   pages={731--754},
   issn={0025-5874},
   review={\MR{2206623 (2007b:17006)}},
   doi={10.1007/s00209-005-0874-x},
}

\bib{weyman}{book}{
   author={Weyman, Jerzy},
   title={Cohomology of vector bundles and syzygies},
   series={Cambridge Tracts in Mathematics},
   volume={149},
   publisher={Cambridge University Press},
   place={Cambridge},
   date={2003},
   pages={xiv+371},
   isbn={0-521-62197-6},
   review={\MR{1988690 (2004d:13020)}},
   doi={10.1017/CBO9780511546556},
}

		\end{biblist}
	\end{bibdiv}

\end{document}